\documentclass[12pt,a4paper]{amsart}

\usepackage{mathtools}
\usepackage{mathrsfs}
\usepackage[margin=1in]{geometry}

\newtheorem{definition}{Definition}
\newtheorem{proposition}{Proposition}
\newtheorem{corollary}{Corollary}
\newtheorem{theorem}{Theorem}
\newtheorem{lemma}{Lemma}

\DeclareMathOperator{\const}{const}

\begin{document}
\title{On the joint spectral radius of nonnegative matrices}
\author{Vuong Bui}
\address{Vuong Bui, Institut f\"ur Informatik, 
Freie Universit\"{a}t Berlin, Takustra{\ss}e~9, 14195 Berlin, Germany}
\thanks{The author is supported by the Deutsche 
Forschungsgemeinschaft (DFG) Graduiertenkolleg ``Facets of Complexity'' 
(GRK 2434).}
\email{bui.vuong@yandex.ru}
\subjclass[2020]{15A18, 15A60, 65F15}
\begin{abstract}
	We give an effective bound of the joint spectral radius $\rho(\Sigma)$ for a finite set $\Sigma$ of nonnegative matrices: For every $n$,
	\begin{equation*}
		\begin{multlined}
		\sqrt[n]{\left(\frac{V}{UD}\right)^{D} \max_C \max_{i,j\in C} \max_{A_1,\dots,A_n\in\Sigma}(A_1\dots A_n)_{i,j}} \le \rho(\Sigma) \\
		\le \sqrt[n]{D \max_C \max_{i,j\in C} \max_{A_1,\dots,A_n\in\Sigma}(A_1\dots A_n)_{i,j}},
		\end{multlined}
	\end{equation*}
	where $D\times D$ is the dimension of the matrices, $U,V$ are respectively the largest entry and the smallest entry over all the positive entries of the matrices in $\Sigma$, and $C$ is taken over all strongly connected components in the dependency graph. The dependency graph is a directed graph where the vertices are the dimensions and there is an edge from $i$ to $j$ if and only if $A_{i,j}\ne 0$ for some matrix $A\in\Sigma$.

	Furthermore, a bound on the norm is also given: If $\rho(\Sigma)>0$ then there exist a nonnegative integer $r$ and two positive numbers $\alpha,\beta$ so that for every $n$,
	\[
		\alpha n^r{\rho(\Sigma)}^n \le \max_{A_1,\dots,A_n\in\Sigma} \|A_1\dots A_n\| \le \beta n^r{\rho(\Sigma)}^n.
	\]

	Corollaries of the approach include a simple proof for the joint spectral theorem for finite sets of nonnegative matrices and the convergence rate of some sequences. The method in use is mostly based on Fekete's lemma, for both submultiplicative and supermultiplicative sequences.
\end{abstract}

\maketitle

\section{Introduction}
Joint spectral radius is a generalization of spectral radius to a set of matrices, which was firstly introduced in \cite{rota1960note} by Rota and Strang. This has caught a lot of attention with its theoretical interest as well as its applications in engineering fields. We advise the readers to check \cite{jungers2009joint} for a book with a comprehensive treatment of the subject. As for application purposes, we naturally need a method to estimate the joint spectral radius. In this section, we begin with the definition of the radius, followed by some known methods for estimation, and conclude with our proposed bounds for \emph{finite} sets of \emph{nonnegative matrices}. Our method mainly uses Fekete's lemma \cite{fekete1923verteilung} for both submultiplicative and supermultiplicative sequences.

Given a finite set $\Sigma$ of square matrices in $\mathbb C^{d\times d}$, we denote
\[
	\|\Sigma^n\| = \max_{A_1,\dots,A_n\in\Sigma} \|A_1\dots A_n\|.
\]

In \cite{rota1960note}, the joint spectral radius $\rho(\Sigma)$ of the set $\Sigma$ is defined to be the limit
\[
	\rho(\Sigma) = \lim_{n\to\infty} \sqrt[n]{\|\Sigma^n\|}.
\]

In fact, $\rho(\Sigma)$ is also defined for infinite bounded sets $\Sigma$. However, we only consider \emph{finite} sets of matrices in this text. Whether the results hold for infinite sets or how they can be extended is left open.

The following result, which appears in most of the sources, is used to prove that the limit exists. We provide it again here as it also gives a bound on the radius. It is often expressed in submultiplicative norms and in a slightly different form. However, for convenience, the \emph{maximum norm} will be used for a matrix throughout the work unless stated otherwise \footnote{The maximum norm is not a submultiplicative norm, as required in the definition of a matrix norm in some texts. However, it does not affect the asymptotic behavior. To be more precise, the maximum norm here is the largest absolute value of any entry in the matrix. It becomes the largest entry when only nonnegative matrices are considered.}. Also, to emphasize that the dimension is a constant, we denote $D=d$ hereafter.

\begin{proposition}[A slight modification of the popular proof for the maximum norm \footnote{In the form for a submultiplicative norm, the expression on the right would be $\inf_n \sqrt[n]{\|\Sigma^n\|}$.}]
\label{prop:submulti}
	The following limit exists and can be expressed as:
	\[
		\lim_{n\to\infty} \sqrt[n]{\|\Sigma^n\|} = \inf_n \sqrt[n]{D\|\Sigma^n\|}.
	\]
\end{proposition}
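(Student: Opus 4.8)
The plan is to apply Fekete's lemma for submultiplicative sequences to a suitably rescaled version of $\|\Sigma^n\|$. The obstacle to address is that the maximum norm itself is \emph{not} submultiplicative, so Fekete's lemma cannot be applied to $\|\Sigma^n\|$ directly; the role of the constant $D$ is precisely to repair this failure, which is why it appears inside the root on the right-hand side.

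First I would establish the key submultiplicativity estimate for the maximum norm. For any two $D\times D$ matrices $A,B$, writing $(AB)_{i,j}=\sum_{k}A_{i,k}B_{k,j}$ gives $|(AB)_{i,j}|\le\sum_{k}|A_{i,k}|\,|B_{k,j}|\le D\|A\|\,\|B\|$, hence $\|AB\|\le D\|A\|\,\|B\|$. Applying this to an arbitrary length-$(n+m)$ product by splitting $A_1\dots A_{n+m}=(A_1\dots A_n)(A_{n+1}\dots A_{n+m})$ and then taking the maximum over all choices of factors from $\Sigma$ yields the estimate $\|\Sigma^{n+m}\|\le D\|\Sigma^n\|\,\|\Sigma^m\|$.

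Next I would set $a_n=D\|\Sigma^n\|$ and multiply the previous inequality by $D$ on both sides to obtain $a_{n+m}\le a_na_m$, so that $(a_n)$ is genuinely submultiplicative. Fekete's lemma then gives $\lim_{n\to\infty}\sqrt[n]{a_n}=\inf_n\sqrt[n]{a_n}$, that is, $\lim_{n\to\infty}\sqrt[n]{D\|\Sigma^n\|}=\inf_n\sqrt[n]{D\|\Sigma^n\|}$.

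Finally, to recover the stated limit I would factor $\sqrt[n]{D\|\Sigma^n\|}=\sqrt[n]{D}\cdot\sqrt[n]{\|\Sigma^n\|}$ and use $\sqrt[n]{D}\to1$; since the left-hand factor's product converges, so does $\sqrt[n]{\|\Sigma^n\|}$, and the two limits coincide, giving $\lim_{n\to\infty}\sqrt[n]{\|\Sigma^n\|}=\inf_n\sqrt[n]{D\|\Sigma^n\|}$. The only point requiring a little care is the degenerate case where $\|\Sigma^n\|=0$ for some (equivalently, all large) $n$, in which both sides vanish and the identity holds trivially; otherwise every $n$th root is taken of a positive quantity and the manipulations above are valid.
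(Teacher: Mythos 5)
Your proof is correct and follows essentially the same route as the paper: establish $\|AB\|\le D\|A\|\|B\|$ for the maximum norm, observe that $\{D\|\Sigma^n\|\}_n$ is submultiplicative, apply Fekete's lemma, and strip off the $\sqrt[n]{D}\to 1$ factor. Your explicit treatment of the degenerate case $\|\Sigma^n\|=0$ is a small careful addition the paper leaves implicit, but the argument is otherwise identical.
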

\begin{proof}
	For any two matrices $A,B$, we have
	\[
		\|AB\|=\max_{i,j}|(AB)_{i,j}|=\max_{i,j} \left|\sum_k A_{i,k} B_{k,j}\right| \le D\|A\|\|B\|.
	\]

	For any two positive integers $m,n$, we have
	\[
		\|\Sigma^{m+n}\|=\|A_1\dots A_{m+n}\|\le D\|A_1\dots A_m\|\|A_{m+1}\dots A_{m+n}\|\le D\|\Sigma^m\|\|\Sigma^n\|,
	\]
	where $A_1,\dots,A_{m+n}$ are some matrices from $\Sigma$.

	Writing differently, $D\|\Sigma^{m+n}\| \le (D\|\Sigma^m\|) (D\|\Sigma^n\|)$ means the sequence $\{D\|\Sigma^n\|\}_n$ is submultiplicative.
	By Fekete's lemma, $\sqrt[n]{D\|\Sigma^n\|}$ converges to $\inf_n \sqrt[n]{D\|\Sigma^n\|}$, which is also the limit of $\sqrt[n]{\|\Sigma^n\|}$.
\end{proof}	

One of the popular ways to estimate the joint spectral radius is as follows (see Proposition $1.6$ and Section $2.3.3$ on ``Branch and Bounds Methods'' in the book \cite{jungers2009joint}). For any sequence of matrices $A_1,\dots,A_m\in\Sigma$, the $m$-th root of the (ordinary) spectral radius of $A_1\dots A_m$ is a lower bound for $\rho(\Sigma)$. Denote \footnote{Note that when we write $\rho(A)$ for a matrix $A$, we mean the classic spectral radius for $A$. There should be no confusion as the type of spectral radius is decided by the argument for $\rho$.} 
\[
	P_m(\Sigma)=\max_{A_1,\dots,A_m\in\Sigma} \rho(A_1\dots A_m).
\]
Together with the bound from Proposition \ref{prop:submulti}, we can bound $\rho(\Sigma)$ from both sides: For any $m$,
\begin{equation}\label{eq:tradition}
	\sqrt[m]{P_m(\Sigma)}\le\rho(\Sigma)\le\sqrt[m]{D\|\Sigma^m\|}.
\end{equation}

One of the points supporting this method of bounding is that the limit superior of the sequence for the left side and the limit of the sequence for the right side (with respect to $m$) are equal to $\rho(\Sigma)$. In fact, $\limsup_{n\to\infty} \sqrt[n]{P_n(\Sigma)}$ is called the \emph{generalized spectral radius} of $\Sigma$. That the two radii are equal for the case of \emph{finite} sets $\Sigma$ is the content of the joint spectral radius theorem. It was conjectured by Daubechies and Lagarias in \cite{daubechies1992sets} and gets proved in \cite{berger1992bounded} for the first time by Berger and Wang. The readers can also see Theorem $2.3$ in the book \cite{jungers2009joint} for a reference.

\begin{theorem}[The joint spectral radius theorem \cite{daubechies1992sets} \cite{berger1992bounded}]
	For every \emph{finite} set $\Sigma$ of matrices, 
	\[
		\limsup_{n\to\infty} \sqrt[n]{P_n(\Sigma)}=\lim_{n\to\infty} \sqrt[n]{\|\Sigma^n\|}.
	\]
\end{theorem}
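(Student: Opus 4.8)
The plan is to prove the two inequalities between the \emph{generalized spectral radius} $\hat\rho(\Sigma):=\limsup_{n\to\infty}\sqrt[n]{P_n(\Sigma)}$ and the joint spectral radius $\rho(\Sigma)=\lim_{n\to\infty}\sqrt[n]{\|\Sigma^n\|}$ by routing through an auxiliary \emph{trace radius}. The inequality $\hat\rho(\Sigma)\le\rho(\Sigma)$ is immediate: for any matrix $M$ one has $\rho(M)\le\|M\|_{\mathrm{op}}\le D\|M\|$, the spectral radius being dominated by the operator norm, which in turn is comparable to the maximum norm; hence $P_n(\Sigma)\le D\|\Sigma^n\|$, and taking $n$-th roots and $\limsup$ gives the claim. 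All the difficulty lies in the reverse inequality $\rho(\Sigma)\le\hat\rho(\Sigma)$. My device is the quantity $T_n:=\max_{A_1,\dots,A_n\in\Sigma}|\operatorname{tr}(A_1\cdots A_n)|$ with radius $\tau(\Sigma):=\limsup_n\sqrt[n]{T_n}$. Since $|\operatorname{tr}(M)|=\bigl|\sum_i\lambda_i(M)\bigr|\le D\rho(M)$, I get $T_n\le D\,P_n(\Sigma)$ and therefore $\tau(\Sigma)\le\hat\rho(\Sigma)$ for free. The whole theorem then reduces to establishing $\rho(\Sigma)\le\tau(\Sigma)$, for chaining $\rho\le\tau\le\hat\rho\le\rho$ forces equality throughout.

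To prove $\rho(\Sigma)\le\tau(\Sigma)$ I would first reduce to the \emph{irreducible} case by induction on $D$, the scalar case $D=1$ being trivial. If $\Sigma$ has a common nontrivial invariant subspace, a change of basis puts every $A\in\Sigma$ in block upper-triangular form whose diagonal blocks form sets $\Sigma_1,\Sigma_2$ of strictly smaller dimension. Because the spectrum of a block-triangular matrix is the union of the spectra of its diagonal blocks, one has $\rho(A_1\cdots A_n)=\max\bigl(\rho(\text{$\Sigma_1$-word}),\rho(\text{$\Sigma_2$-word})\bigr)$, and maximizing over index choices yields $P_n(\Sigma)=\max(P_n(\Sigma_1),P_n(\Sigma_2))$, so $\hat\rho(\Sigma)=\max(\hat\rho(\Sigma_1),\hat\rho(\Sigma_2))$. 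A direct estimate of the off-diagonal block of a length-$n$ product, which is a sum of $n$ terms each a product of two diagonal-block words, shows $\rho(\Sigma)=\max(\rho(\Sigma_1),\rho(\Sigma_2))$ as well, the off-diagonal growth contributing at most a polynomial factor invisible to the $n$-th root. The induction hypothesis then closes this case.

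In the irreducible case the key tool is \emph{Burnside's theorem}: an irreducible set of matrices spans $M_D(\mathbb C)$ by its products, so I can fix finitely many words $W_1,\dots,W_{D^2}$ in $\Sigma$, of length at most some $L$, forming a basis of $M_D(\mathbb C)$. Writing each matrix unit as $E_{ji}=\sum_l c^{(ij)}_l W_l$ and using $M_{ij}=\operatorname{tr}(M\,E_{ji})$, I obtain for any length-$n$ product $M=A_1\cdots A_n$ the expansion $M_{ij}=\sum_l c^{(ij)}_l\operatorname{tr}(M W_l)$, where each $\operatorname{tr}(M W_l)$ is the trace of a product of length at most $n+L$. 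Bounding entrywise gives $\|\Sigma^n\|\le C\max_{0\le t\le L}T_{n+t}$ with $C$ independent of $n$; since $L$ is fixed, taking $n$-th roots and $\limsup$ yields precisely $\rho(\Sigma)\le\tau(\Sigma)$, finishing the irreducible case and hence the theorem.

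The step I expect to be the main obstacle is this last one: controlling the \emph{norm} of a product by \emph{traces} of slightly longer products. For a single matrix this is outright false, since nonzero nilpotents have vanishing traces of all powers, so the argument genuinely needs the algebraic richness supplied by irreducibility through Burnside's theorem. The delicate point is securing a uniform constant $C$ and a uniformly bounded extra length $L$, which is exactly what lets the $\limsup$ pass through and collapses $\rho$, $\tau$, and $\hat\rho$ onto one another. By comparison the block-triangular reduction is routine, although the off-diagonal estimate must be carried out carefully enough to confirm that it never inflates the exponential rate.
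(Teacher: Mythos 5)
Your argument is essentially correct, but it is a genuinely different route from the paper's --- and it proves strictly more than the paper's own proof does. The paper never reproves the Berger--Wang theorem in full generality: it cites \cite{berger1992bounded} for the statement and, in Section \ref{sec:corollary}, derives it only for finite sets of \emph{nonnegative} matrices. Its mechanism is combinatorial: inside a strongly connected component of the dependency graph, a maximal entry $(A_1\dots A_m)_{i,j}$ is completed to a cycle by appending a short positive path from $j$ back to $i$, so that a diagonal entry --- hence the ordinary spectral radius --- of a product of length $m+\delta(j,i)$ dominates $\|\Sigma^m\|_C$ up to a constant; combining this with Theorem \ref{thm:order-of-norm} and \eqref{eq:over-components} gives $\limsup_n \sqrt[n]{P_n(\Sigma)}=\lim_n\sqrt[n]{\|\Sigma^n\|}$ along with explicit constants and the $O(1/n)$ convergence rate. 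Your proof instead handles arbitrary complex matrices: block-triangularize and induct on $D$ in the reducible case, and in the irreducible case invoke Burnside's theorem to expand each matrix unit over boundedly many words $W_l$, so that $M_{ij}=\sum_l c^{(ij)}_l\operatorname{tr}(MW_l)$ controls $\|\Sigma^n\|$ by traces of products of length at most $n+L$, closing the chain $\rho\le\tau\le\hat\rho\le\rho$ via $|\operatorname{tr}(M)|\le D\rho(M)$. This is essentially the known trace-characterization proof of Berger--Wang (in the style of Chen and Zhou), and what it buys is full generality for complex matrices, where the paper's sign-based dependency-graph argument cannot operate; what the paper's approach buys is effectivity --- explicit constants and convergence rates --- which your qualitative $\limsup$ manipulations do not yield. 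Two points to tighten: in the reducible step, your claim that the off-diagonal block contributes ``at most a polynomial factor'' silently assumes $\|\Sigma_i^m\|=O(\rho(\Sigma_i)^m)$, which you do not yet know at that stage; the standard repair is $\|\Sigma_i^m\|\le C_\epsilon(\rho(\Sigma_i)+\epsilon)^m$, giving $\|\Sigma^n\|\le C'_\epsilon\, n\,(\max(\rho(\Sigma_1),\rho(\Sigma_2))+\epsilon)^n$ and then $\epsilon\to 0$ after taking $n$-th roots. And at the very end, passing from $\|\Sigma^n\|\le C\max_{1\le t\le L}T_{n+t}$ to $\rho\le\tau$ requires the index shift $T_{n+t}^{1/n}=T_{n+t}^{1/(n+t)}\cdot T_{n+t}^{t/(n(n+t))}$, which is harmless because $T_m$ grows at most exponentially, but it deserves a line. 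Neither issue is a gap in the idea; both are routine.
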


As discussed in \cite[Section 2]{gripenberg1996computing}, the limit superior is shown to be not replaceable by a limit in general with a counterexample. Also, it is asked there that: What is the convergence rate of $\max_{1\le m\le n} \sqrt[m]{P_m(\Sigma)}$ and $\min_{1\le m\le n} \sqrt[m]{D \|\Sigma^m\|}$ (with respect to $n$)\footnote{It was actually asked in \cite{gripenberg1996computing} for $\sqrt[m]{\|\Sigma^m\|}$ for submultiplicative norms. We adapt it for the maximum norm.} to $\rho(\Sigma)$? This question is critical to the efficiency of the bound in \eqref{eq:tradition}. Section \ref{sec:corollary} will show that both sequences converge at the rate $O(1/n)$ and give a simple proof of the joint spectral radius theorem, both however only for finite sets of nonnegative matrices. 

The following bound in \cite{kozyakin2009accuracy} has a clearer convergence rate: For every $n$,
\begin{equation}\label{eq:kozyakin}
	\sqrt[n]{f(n) \|\Sigma^n\|} \le \rho(\Sigma) \le \sqrt[n]{\|\Sigma^n\|},
\end{equation}
where $f(n)$ is rather complicated and it may grow very low (note that the norm in \eqref{eq:kozyakin} is a submultiplicative norm). The work \cite{kozyakin2009accuracy} describes $f(n)$ explicitly, but loosely speaking, $f(n)$ is in general roughly about $C^{-n^\alpha}$ for a constant $C$ and $\alpha=\ln(D-1)/\ln D$. Although the bound in \eqref{eq:kozyakin} is very interesting, it is hard to estimate $\rho(\Sigma)$ effectively as the ratio of the two bounds is large.

If one restricts the scope to \emph{irreducible} sets of matrices, we have the following bound in \cite{wirth1998calculation}: There is constant $\gamma$ so that for every $n$,
\[
	\sqrt[n]{\gamma^{1+\ln n} \|\Sigma^n\|}\le \rho(\Sigma)\le \sqrt[n]{\|\Sigma^n\|}.
\]
Note that the norm here is also submultiplicative. Also note that the quantity $\gamma^{1+\ln n}$ under the $n$-th root is actually of order $n^{-t}$ for some $t$. While the condition of irreducible sets is algebraic, our condition given in Theorem \ref{thm:main} below is on the signs of the entries of matrices.

Before introducing our bounds, we need to define the dependency graph.
\begin{definition}\label{def:dependency-graph}
	The \emph{dependency graph} of a set of matrices $\Sigma$ is a directed graph where the vertices are the dimensions and there is an edge from $i$ to $j$ if and only if $A_{i,j}\ne 0$ for some matrix $A\in\Sigma$ (loops are allowed). This graph can be partitioned into strongly connected components, for which we will call \emph{components} for short. For a component $C$, we denote 
	\[
		\|\Sigma^n\|_C=\max_{A_1,\dots,A_n\in\Sigma}\max_{i,j\in C} |(A_1\dots A_n)_{i,j}|.
	\]
	Also, we denote by $\delta(i,j)$ the distance from $i$ to $j$ in the dependency graph.
\end{definition}

Our proposed bound is given in the following theorem, which works only for \emph{finite} sets of \emph{nonnegative matrices} and will be proved in Section \ref{sec:method}. From now on, we assume that at least one entry of some matrix of $\Sigma$ is nonzero, as otherwise all matrices are zero matrices, which is trivial. The assumption is to let $U,V$ in the theorem below well defined.
\begin{theorem}\label{thm:main}
	Given a finite set $\Sigma$ of nonnegative matrices. For every $n$,
	\[
		\sqrt[n]{\left(\frac{V}{UD}\right)^D \max_C \|\Sigma^n\|_C} \le \rho(\Sigma) \le \sqrt[n]{D \max_C \|\Sigma^n\|_C},
	\]
	where $D\times D$ is the dimension of the matrices, $U,V$ are respectively the largest entry and the smallest entry over all the positive entries of the matrices in $\Sigma$, and $C$ is taken over all components in the dependency graph. 
\end{theorem}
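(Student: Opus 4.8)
The plan is to sandwich $\rho(\Sigma)$ by relating the full norm $\|\Sigma^n\|$ to the componentwise quantity $\max_C \|\Sigma^n\|_C$. Since Proposition~\ref{prop:submulti} already gives $\rho(\Sigma)=\lim_n\sqrt[n]{D\|\Sigma^n\|}$, it suffices to prove two chains of inequalities: a cheap one showing $\max_C\|\Sigma^n\|_C \le \|\Sigma^n\|$ (so the right-hand bound follows immediately from the Proposition, since $\sqrt[n]{D\max_C\|\Sigma^n\|_C}\le\sqrt[n]{D\|\Sigma^n\|}\to\rho(\Sigma)$, and the latter is actually an infimum so each term dominates the limit), and a harder one bounding $\|\Sigma^n\|$ from above in terms of $\max_C\|\Sigma^n\|_C$ up to the polynomial-in-entry factor $(UD/V)^D$.

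The first step is to set up the structural picture. Any entry $(A_1\cdots A_n)_{i,j}$ expands as a sum $\sum_{\text{paths}} A_1[i,k_1]A_2[k_1,k_2]\cdots A_n[k_{n-1},j]$ over walks $i=k_0,k_1,\dots,k_n=j$ of length $n$ in the dependency graph; a nonzero contribution forces every consecutive pair to be an edge. I would first argue that the growth of $\|\Sigma^n\|$ is governed by walks that spend most of their time inside a single strongly connected component. Condense the dependency graph into its DAG of components; any walk visits a chain $C_{(1)}\to C_{(2)}\to\cdots\to C_{(\ell)}$ of components, and since the condensation is acyclic with at most $D$ vertices, the number of component-switches along any walk is at most $D$, hence $\ell\le D$. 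The total time the walk spends strictly \emph{between} components (on the at most $D$ transition edges) is bounded, so the bulk of the length $n$ is spent accumulating inside the components.

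The central step, and the main obstacle, is to show that the intra-component accumulation over a chain of $\le D$ components does not beat the single best component by more than the claimed factor. The idea is: if a walk of length $n$ passes through components $C_{(1)},\dots,C_{(\ell)}$ spending lengths $n_1,\dots,n_\ell$ (with $\sum n_t$ within $D$ of $n$), its weight is at most a product of terms each bounded by $\|\Sigma^{n_t}\|_{C_{(t)}}$ times at most $D$ transition entries bounded by $U$. Using supermultiplicativity of the within-component quantities (which is where the lower-bound factor $(V/(UD))^D$ and Fekete's lemma for supermultiplicative sequences enter: within a single component one can splice optimal products together, paying a factor $V$ per junction since the component is strongly connected, so $\|\Sigma^{m+n}\|_C \ge (V/\text{stuff})\,\|\Sigma^m\|_C\|\Sigma^n\|_C$), each $\|\Sigma^{n_t}\|_{C_{(t)}}$ grows like $\rho(\Sigma)^{n_t}$ up to bounded factors, so the product over the chain telescopes to $\rho(\Sigma)^{\sum n_t}$ up to a factor that is at most $(U/V)^D$ times a power-of-$D$ and power-of-$U$ correction for the $\le D$ transitions. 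Taking $n$-th roots kills everything except $\rho(\Sigma)$ in the limit, and tracking the constants carefully produces exactly the $(V/(UD))^D$ factor on the left. I expect the delicate bookkeeping to be pinning down the correct exponent $D$ on the factor $V/(UD)$: one must verify that the number of transition edges plus the junction-splicing cost accumulates to at most $D$ such factors, and that a component never actually realizes a growth rate strictly larger than $\rho(\Sigma)$ (each $\rho$ of a component is $\le\rho(\Sigma)$, and at least one achieves it), so that selecting the dominant component $C$ in $\max_C\|\Sigma^n\|_C$ really captures the leading term.

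Finally I would assemble the pieces: combining $\|\Sigma^n\|\le (UD/V)^D\,\max_C\|\Sigma^n\|_C$ (the hard direction, rearranged) with Proposition~\ref{prop:submulti} gives the left inequality after taking $n$-th roots, while $\max_C\|\Sigma^n\|_C\le\|\Sigma^n\|$ combined with the Proposition gives the right inequality; the multiplicative slack $D$ on the right comes directly from the submultiplicative constant $D$ already present in $\sqrt[n]{D\|\Sigma^n\|}$.
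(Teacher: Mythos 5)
Your structural skeleton---condense the dependency graph into its DAG, decompose walks into chains of at most $D$ components, splice products inside a strongly connected component at a cost of $V$ per junction, and run Fekete's lemma in both the sub- and supermultiplicative directions---is the same as the paper's. But you have wired the two halves of the theorem to the wrong ingredients. The cheap inequality $\max_C\|\Sigma^n\|_C\le\|\Sigma^n\|$ points the wrong way for the right-hand bound: from it and Proposition~\ref{prop:submulti} you only get $\rho(\Sigma)\le\sqrt[n]{D\|\Sigma^n\|}$, and you cannot replace $\|\Sigma^n\|$ by the \emph{smaller} quantity $\max_C\|\Sigma^n\|_C$ for free. What the cheap inequality actually buys is $\lambda_C=\lim_n\sqrt[n]{\|\Sigma^n\|_C}\le\rho(\Sigma)$ for every component; combined with the supermultiplicative Fekete bound \emph{inside} each component (by Proposition~\ref{prop:supmulti} restricted to $C$, the limit $\lambda_C$ is a supremum, so $(V/(UD))^D\|\Sigma^n\|_C\le\lambda_C^n$ for every $n$), this yields precisely the \emph{left}-hand bound---so the factor $(V/(UD))^D$ comes from within-component splicing, not from comparing $\|\Sigma^n\|$ with $\max_C\|\Sigma^n\|_C$. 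The right-hand bound is the hard one: it needs $\rho(\Sigma)\le\max_C\lambda_C$, i.e., that transitions between components contribute no exponential growth, after which the per-component infimum $\lambda_C\le\sqrt[n]{D\|\Sigma^n\|_C}$ finishes it for each $n$.

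Moreover, the inequality you designate as the hard direction, $\|\Sigma^n\|\le(UD/V)^D\max_C\|\Sigma^n\|_C$, is false with any constant: the paper's own closing example $A=\left[\begin{smallmatrix}1&1\\0&1\end{smallmatrix}\right]$ has $\|\Sigma^n\|=n$, while both components are single vertices with loops, so $\max_C\|\Sigma^n\|_C=1$. The reason is visible in your own bookkeeping: the chain decomposition produces a sum over chains and over compositions $n_1+\dots+n_\ell\approx n$, and the number of compositions grows polynomially in $n$ (degree up to $\ell-1\le D-1$). This is exactly why the paper proves the two-sided bound $\const n^r\lambda^n\le\|\Sigma^n\|\le\const n^r\lambda^n$ of Theorem~\ref{thm:order-of-norm} with a genuine polynomial factor $n^r$, rather than a constant-factor comparison. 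The polynomial factor is harmless for identifying $\rho(\Sigma)=\lambda=\max_C\lambda_C$ after taking $n$-th roots---your remark that the limit kills everything is correct at that point---but the per-$n$ constant-factor statement from which you assemble the final bounds is the one false link. Once you replace it with the polynomial bound and swap which side of the theorem each ingredient serves, your argument becomes the paper's proof.
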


The merit of Theorem \ref{thm:main} is that the ratio between the upper bound and the lower bound is the $n$-th root of a constant, which means the interval length is $O(1/n)$. Therefore, when $D$ and $U/V$ are not too large, the gap between them can be reasonably small even with a not so large $n$. Although $U/V$ can be arbitrarily large, the appearance of $U$ and $V$ is essential to the formula. For example, let $\Sigma$ contain only one matrix $A$ with its powers:
\begin{equation*}
	A = \begin{bmatrix}
		1 & \frac{1}{N}\\
		N & 1
	\end{bmatrix},\quad
	A^n = 2^{n-1}\begin{bmatrix}
		1 & \frac{1}{N}\\
		N & 1
	\end{bmatrix} = 2^{n-1} A,
\end{equation*}
where $N$ is a large number. The joint spectral radius is obviously $2$ while $\max_C \|\Sigma^1\|_C=N$, therefore, the relation between $U$ and $V$ must present in the formula in some form.

Note that $\|\Sigma^n\|_C$ is still computed based on the computation of all $|\Sigma|^n$ combinations. It is not a bad approach as $\rho(\Sigma)$ is not approximable in polynomial time unless $P=NP$ (see \cite{tsitsiklis1997lyapunov}). Furthermore, the problem of checking $\rho(\Sigma)\le 1$ is even undecidable (see \cite{blondel2000boundedness}). On the other hand, the theorem applies very well if the set contains only one matrix, i.e. the case of the ordinary spectral radius. In fact, the bound \eqref{eq:kozyakin} is perhaps the most effective estimation of that type for the spectral radius of a matrix so far in literature, while our result improves upon it for nonnegative matrices.

During the proof of Theorem \ref{thm:main}, we also give the following bound on $\|\Sigma^n\|$ as in Theorem \ref{thm:order-of-norm}: If $\rho(\Sigma)>0$ then there exists a nonnegative integer $r$ so that for every $n$,
\[
	\const n^r{\rho(\Sigma)}^n\le\|\Sigma^n\|\le\const n^r{\rho(\Sigma)}^n.
\]
The inequalities show that $\sqrt[n]{\|\Sigma^n\|}$ converges to $\rho(\Sigma)$ at the rate $O(1/n)$.

Note that throughout the text, the notation $\const$ stands for some positive constant, and consecutive instances of $\const$ may present different constants.

In the theme of nonnegative matrices, we mention \cite[Theorem $16$]{blondel2005accuracy}, which works for nonnegative matrices only: Define a matrix $S$ so that $S_{i,j}=\max_{A\in\Sigma} A_{i,j}$, we have
\begin{equation}\label{eq:blondel}
	\frac{\rho(S)}{m}\le \rho(\Sigma)\le \rho(S).
\end{equation}

The gap between the two bounds is obviously larger than that of Theorem \ref{thm:main}. However, an advantage of this larger gap is that it asks for lower computational complexity when having to calculate the spectral radius of a single matrix. In fact, the authors of \cite{blondel2005accuracy} asked whether the bound in \eqref{eq:blondel} can be improved at a reasonable computational cost.

It is rather quite obvious that the method of bounding in Theorem \ref{thm:main} is asymptotically better than the other methods except that it is not clear for the one in \eqref{eq:tradition}. The comparison to the latter one can be done only in Section \ref{sec:corollary} when we have all the necessary results. In short, the  method in Theorem \ref{thm:main} is better than the one in \eqref{eq:tradition} by a root of a polynomial of degree $r$. However, there is a modification to make the two methods asymptotically equivalent.

\section{Proof of Theorem \ref{thm:main}}
\label{sec:method}
The central idea of the approach in Proposition \ref{prop:submulti} is $\|\Sigma^{m+n}\| \le \const \|\Sigma^m\| \|\Sigma^n\|$. The other direction $\|\Sigma^{m+n}\| \ge \const \|\Sigma^m\| \|\Sigma^n\|$ suggests an alternative approach that works in the case of nonnegative matrices. While Proposition \ref{prop:submulti} gives an upper bound for $\rho(\Sigma)$, Proposition \ref{prop:supmulti} below gives a lower bound. 
\begin{proposition}\label{prop:supmulti}
	Given a finite set $\Sigma$ nonnegative matrices with a connected dependency graph, we have the following weak form of supermultiplicativity: For every $m,n$, 
	\[
		\|\Sigma^m\| \|\Sigma^n\| \le \left(\frac{UD}{V}\right)^D \|\Sigma^{m+n}\|,
	\]
	where $U,V,D$ are defined as in Theorem \ref{thm:main}.
\end{proposition}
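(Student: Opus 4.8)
The plan is to establish this weak supermultiplicativity by a \emph{concatenation} argument: I would glue the two near-optimal products realizing $\|\Sigma^m\|$ and $\|\Sigma^n\|$ together through a short connecting product supplied by the dependency graph. Since the maximum norm of a nonnegative matrix is simply its largest entry, I may write $\|\Sigma^m\| = (A_1\cdots A_m)_{a,b}$ and $\|\Sigma^n\| = (B_1\cdots B_n)_{c,d}$ for appropriate products $P=A_1\cdots A_m$, $Q=B_1\cdots B_n$ and indices $a,b,c,d$. The key use of the hypothesis is the following: every edge of the dependency graph records an entry that is positive, hence at least $V$, and connectedness guarantees a directed path from $b$ to $c$. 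Choosing, for each edge of such a path, a matrix of $\Sigma$ witnessing the corresponding nonzero entry produces a product $R$ of length $\ell=\delta(b,c)$ with $R_{b,c}\ge V^{\ell}$, since the $(b,c)$ entry of $R$ dominates the single summand running along that path.

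With $R$ in hand, I would consider the product $PRQ$, which is a product of $m+\ell+n$ matrices of $\Sigma$. Because all factors are nonnegative, its $(a,d)$ entry is bounded below by the single term $P_{a,b}R_{b,c}Q_{c,d}$, so that
\[
	\|\Sigma^{m+\ell+n}\| \ge V^{\ell}\,\|\Sigma^m\|\,\|\Sigma^n\|.
\]

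The hard part is that this construction yields a word of length $m+\ell+n$ rather than $m+n$: the connecting path costs up to $\ell$ extra factors, and these must be removed without losing too much. To do this I would invoke submultiplicativity in the reverse direction. The inequality $\|\Sigma^{p+q}\|\le D\|\Sigma^p\|\|\Sigma^q\|$ proved inside Proposition \ref{prop:submulti}, applied with $p=\ell$ and $q=m+n$, gives $\|\Sigma^{m+\ell+n}\|\le D\|\Sigma^\ell\|\,\|\Sigma^{m+n}\|$, and iterating the same inequality bounds $\|\Sigma^\ell\|$ crudely by $D^{\ell-1}U^{\ell}$. Chaining these estimates with the displayed lower bound produces $V^{\ell}\|\Sigma^m\|\|\Sigma^n\|\le (DU)^{\ell}\|\Sigma^{m+n}\|$, that is, $\|\Sigma^m\|\|\Sigma^n\|\le (UD/V)^{\ell}\|\Sigma^{m+n}\|$.

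It remains only to control the exponent. A simple connecting path in a strongly connected graph on $D$ vertices uses at most $D-1$ edges, so $\ell\le D-1$; and since $U\ge V$ and $D\ge 1$ force $UD/V\ge 1$, the exponent may be enlarged from $\ell$ to $D$, giving the stated constant $(UD/V)^{D}$. The degenerate case $b=c$ needs no connecting path and in fact yields the sharper bound $\|\Sigma^m\|\|\Sigma^n\|\le\|\Sigma^{m+n}\|$, so it is subsumed by the general estimate.
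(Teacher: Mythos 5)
Your proof is correct and follows essentially the same route as the paper's: splice the two optimal products together through a connecting path of length $\ell=\delta\le D-1$ in the dependency graph (contributing a factor of at least $V^{\ell}$), then strip off the extra factors via the submultiplicative inequality $\|\Sigma^{m+\ell+n}\|\le D\|\Sigma^{\ell}\|\|\Sigma^{m+n}\|$ together with the crude bound $\|\Sigma^{\ell}\|\le D^{\ell-1}U^{\ell}$, and enlarge the exponent from $\ell$ to $D$ using $UD/V\ge 1$. Your explicit treatment of the degenerate case $b=c$ and your derivation of the bound on $\|\Sigma^{\ell}\|$ by iterating submultiplicativity (where the paper invokes its standalone lemma on positive entries) are only cosmetic differences.
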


Before presenting the proof, we first give a lemma that will be used in the manipulations of the proof.
\begin{lemma}
	Given some $n$ matrices $A_1,\dots,A_n$ from $\Sigma$, for each positive entry $(A_1\dots A_n)_{i,j}$, we have 
	\[
		V^n \le (A_1\dots A_n)_{i,j} \le D^{n-1}U^n.
	\]
\end{lemma}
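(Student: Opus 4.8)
The plan is to expand the matrix product entrywise and then bound each resulting term, using only the fact that every nonzero entry of a matrix in $\Sigma$ lies in the interval $[V,U]$. Fixing the outer indices $i,j$, the product expands as
\[
	(A_1\dots A_n)_{i,j} = \sum_{k_1,\dots,k_{n-1}=1}^{D} (A_1)_{i,k_1}(A_2)_{k_1,k_2}\cdots(A_n)_{k_{n-1},j},
\]
so that each summand is a product of exactly $n$ entries, one drawn from each factor, and there are $D^{n-1}$ summands in total, one for each choice of the intermediate indices $k_1,\dots,k_{n-1}$.

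For the upper bound, I would observe that every entry of every matrix in $\Sigma$ is at most $U$, being either $0$ or in $[V,U]$ by the definition of $U$. Hence each of the $D^{n-1}$ summands is a product of $n$ such entries and is therefore at most $U^n$. Summing over all of them gives $(A_1\dots A_n)_{i,j}\le D^{n-1}U^n$, and this bound holds for every entry, whether positive or not.

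For the lower bound, I would combine the hypothesis that the entry is positive with the nonnegativity of the matrices. Since all summands are nonnegative and their sum is strictly positive, at least one summand must itself be strictly positive. Such a summand is a product of $n$ strictly positive entries, each of which is then at least $V$, so that single summand is at least $V^n$; discarding the remaining nonnegative summands yields $(A_1\dots A_n)_{i,j}\ge V^n$.

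The argument presents no genuine obstacle; the only points that require care are the bookkeeping that the number of intermediate index tuples is exactly $D^{n-1}$ rather than $D^n$, and the observation that the lower bound rests essentially on nonnegativity, since a positive total forces a positive summand only because no term can be negative and cancel another. An equivalent route would be a short induction on $n$, writing $A_1\cdots A_{n+1}=(A_1\cdots A_n)A_{n+1}$ and applying the bounds for $n$ to each surviving term, but I would regard the direct expansion above as the cleaner presentation.
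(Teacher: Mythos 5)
Your proof is correct and follows essentially the same route as the paper: expand $(A_1\dots A_n)_{i,j}$ as a sum over the $D^{n-1}$ intermediate index tuples, bound each summand above by $U^n$ and note that positivity of the total (with nonnegativity) forces a positive summand, which is at least $V^n$. You merely spell out more explicitly than the paper does why a positive summand must exist, which is a harmless refinement of the same argument.
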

\begin{proof}
	The entry of the product can be written as
	\[
		(A_1\dots A_n)_{i,j} = \sum_{\substack{k_0,k_1,\dots,k_n\\ k_0=i, k_n=j}} (A_1)_{k_0,k_1} (A_2)_{k_1,k_2} \dots (A_n)_{k_{n-1},k_n}.
	\]

	Each positive summand is obviously at least $V^n$ and at most $U^n$. There are at most $D^{n-1}$ summands. The conclusion follows.
\end{proof}

\begin{proof}[Proof of Proposition \ref{prop:supmulti}]
	Let $\delta(i,j)$ be the distance from $i$ to $j$ in the dependency graph, we have 
	\begin{align*}
		\|\Sigma^m\|\|\Sigma^n\|&=(A_1\dots A_m)_{i,j} (A'_1\dots A'_n)_{i',j'}\\
		&\le V^{-\delta(j,i')} (A_1\dots A_m)_{i,j} (B_1\dots B_{\delta(j,i')})_{j,i'} (A'_1\dots A'_n)_{i',j'}\\
		&\le V^{-\delta(j,i')} (A_1\dots A_m B_1\dots B_{\delta(j,i')} A'_1\dots A'_n)_{i,j'}\\
		&\le V^{-\delta(j,i')} \|\Sigma^{m+n+\delta(j,i')}\|\\
		&\le V^{-\delta(j,i')} D \|\Sigma^{m+n}\| \|\Sigma^{\delta(j,i')}\|\\
		&\le V^{-\delta(j,i')} D \|\Sigma^{m+n}\| D^{\delta(j,i')-1} U^{\delta(j,i')} \\
		&\le \left(\frac{UD}{V}\right)^D \|\Sigma^{m+n}\|,
	\end{align*}
	where the matrices $A_1,\dots,A_m, A'_1,\dots,A'_n$ and the indices $i,j,i',j'$ can be chosen from context. The matrices $B_1,\dots,B_{\delta(j,i')}$ are chosen from $\Sigma$ so that $(B_1\dots B_{\delta(j,i')})_{j,i'}>0$.
\end{proof}

Let $K = (\frac{V}{UD})^D$, we rewrite the inequality as 
\[
	\|\Sigma^{m+n}\| \ge K \|\Sigma^m\| \|\Sigma^n\|.
\]

It means $K \|\Sigma^{m+n}\| \ge (K \|\Sigma^m\|)(K \|\Sigma^n\|)$, i.e. the sequence $K\|\Sigma^n\|$ is supermultiplicative. As this sequence is also positive, $\sqrt[n]{K\|\Sigma^n\|}$ converges to $\sup_n \sqrt[n]{K\|\Sigma^n\|}$ by Fekete's lemma. This is also the limit of $\sqrt[n]{\|\Sigma^n\|}$.

We have now an effective bound of $\rho(\Sigma)$ for any set $\Sigma$ of nonnegative matrices with a connected dependency graph.
\begin{corollary}\label{cor:connected}
	If the dependency graph is connected, then for every $n$,
	\[
		\sqrt[n]{\left(\frac{V}{UD}\right)^D \|\Sigma^n\|} \le \rho(\Sigma) \le \sqrt[n]{D\|\Sigma^n\|},
	\]
	where $U,V,D$ are defined as in Theorem \ref{thm:main}.
\end{corollary}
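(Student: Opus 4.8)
The plan is to read both inequalities off the two one-sided convergence statements already established, so that the corollary amounts to rewriting an infimum and a supremum as bounds valid for each fixed $n$. The connectedness hypothesis enters only to guarantee that Proposition \ref{prop:supmulti} applies; everything else is assembly of facts already proved.

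For the upper bound I would appeal to Proposition \ref{prop:submulti}, which identifies $\rho(\Sigma)=\lim_n \sqrt[n]{\|\Sigma^n\|}$ with $\inf_n \sqrt[n]{D\|\Sigma^n\|}$. Since an infimum is a lower bound for every member of the sequence, we get $\rho(\Sigma)\le \sqrt[n]{D\|\Sigma^n\|}$ for every $n$, which is exactly the right-hand inequality. Nothing more is needed here, and note that this half is valid for arbitrary $\Sigma$, irrespective of connectedness.

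For the lower bound I would use the weak supermultiplicativity coming from Proposition \ref{prop:supmulti}: with $K=(\frac{V}{UD})^D$, the rearranged inequality $K\|\Sigma^{m+n}\|\ge (K\|\Sigma^m\|)(K\|\Sigma^n\|)$ shows that $\{K\|\Sigma^n\|\}_n$ is a positive supermultiplicative sequence. By Fekete's lemma in its supermultiplicative form, $\sqrt[n]{K\|\Sigma^n\|}$ converges to $\sup_n \sqrt[n]{K\|\Sigma^n\|}$, and since $\sqrt[n]{K}\to 1$ this supremum equals $\lim_n \sqrt[n]{\|\Sigma^n\|}=\rho(\Sigma)$. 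Because a supremum dominates every member of the sequence, $\rho(\Sigma)\ge \sqrt[n]{K\|\Sigma^n\|}=\sqrt[n]{(\frac{V}{UD})^D\,\|\Sigma^n\|}$ for every $n$, which is the left-hand inequality.

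The only point demanding a moment of care, and the closest thing to an obstacle, is confirming that the constant factor $K$ does not disturb the limit, i.e. that $\sup_n \sqrt[n]{K\|\Sigma^n\|}=\rho(\Sigma)$ rather than some strictly smaller value. This is immediate from $\sqrt[n]{K}\to 1$ together with positivity of the sequence, which connectedness and the standing assumption that some entry is nonzero guarantee, so that $\|\Sigma^n\|>0$ and the $n$-th roots are well defined. With both inequalities in hand the corollary follows.
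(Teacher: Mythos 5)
Your proof is correct and follows exactly the paper's route: the upper bound is read off from $\rho(\Sigma)=\inf_n\sqrt[n]{D\|\Sigma^n\|}$ in Proposition \ref{prop:submulti}, and the lower bound from applying Fekete's lemma to the positive supermultiplicative sequence $K\|\Sigma^n\|$ with $K=(V/(UD))^D$ from Proposition \ref{prop:supmulti}, so that $\rho(\Sigma)=\sup_n\sqrt[n]{K\|\Sigma^n\|}$ dominates each term. Your extra remark verifying that connectedness guarantees positivity of $\|\Sigma^n\|$ is a point the paper passes over more quickly, but it matches the paper's implicit reasoning.
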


When the concern is the value of $\|\Sigma^n\|$, we have 
\begin{equation}\label{eq:norm-when-connected}
	\const{\rho(\Sigma)}^n\le\|\Sigma^n\|\le\const{\rho(\Sigma)}^n.
\end{equation}

We can extend the treatment to any set of nonnegative matrices, which do not necessarily have a connected dependency graph.

Consider the case when the dependency graph is not connected. For each component $C$, by restricting $\Sigma$ to the indices in $C$, we have actually verified the limit
\[
	\lambda_C=\lim_{n\to\infty} \sqrt[n]{\|\Sigma^n\|_C},
\]
where $\|\Sigma^n\|_C$ is defined as in Definition \ref{def:dependency-graph}.
Note that although a component of only one vertex without any loop is not considered as a connected graph, $\lambda_C=0$ is still defined for such a component as $\|\Sigma^n\|_C=0$ for any $n$. The formula in Corollary \ref{cor:connected} still trivially applies to such a component.

The formula corresponding to \eqref{eq:norm-when-connected} is
\[
	\const (\lambda_C)^n\le\|\Sigma^n\|_C\le\const (\lambda_C)^n.
\]

Let $\lambda = \max_C \lambda_C$ where the maximum is taken over all the components $C$, we have
\begin{equation}\label{eq:over-components}
	\const \lambda^n\le\max_C \|\Sigma^n\|_C\le\const \lambda^n.
\end{equation}

When the concern is the maximum over all entries, we have the following theorem, which demonstrates the rate of convergence for $\sqrt[n]{\|\Sigma^n\|}$ and describes the value of $\|\Sigma^n\|$ better than the mere limit does when the limit is $1$.
\begin{theorem}\label{thm:order-of-norm}
	If $\lambda>0$ then there exists a nonnegative integer $r$ so that for every $n$,
	\[
		\const n^r\lambda^n\le\|\Sigma^n\|\le\const n^r\lambda^n.
	\]
\end{theorem}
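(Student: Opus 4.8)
My plan is to track how the largest entry builds up as a walk in the dependency graph crosses from one strongly connected component to the next. I pass to the condensation, the acyclic graph on the components $C$, and call a component \emph{critical} when $\lambda_C=\lambda$. For a fixed choice of matrices the entry $(A_1\cdots A_n)_{i,j}$ is a sum over all length-$n$ walks from $i$ to $j$, and since the matrices are nonnegative there is \emph{no cancellation}; every such walk runs through a chain of components in topological order, spending a block of consecutive steps inside each component and paying a transition weight in $[V^{D},U]$ between blocks. Combined with the per-component estimates $\const\,\lambda_C^{\,m}\le\|\Sigma^{m}\|_C\le\const\,\lambda_C^{\,m}$ established just before \eqref{eq:over-components}, this turns the whole problem into bookkeeping of how the $n$ steps may be split among the components of a chain.

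For the upper bound I fix one of the finitely many component-paths $D_0\to\cdots\to D_s$ in the condensation and sum the walk weights over the crossing positions. A block of length $m$ spent in a non-critical component contributes a factor comparable to $\lambda_{D_t}^{\,m}$ with $\lambda_{D_t}<\lambda$; after dividing by $\lambda^{m}$ this is a geometric term $\theta^{m}$ with $\theta<1$, so summing over its length gives only a constant and non-critical components never contribute growth. The surviving free parameters are the crossing positions among the critical components of the path, so if a path carries $r+1$ critical components these positions range over a set of size $\binom{n}{r}=\Theta(n^{r})$, each walk being of weight at most $\const\,\lambda^{n}$. Cleanly, the path's generating function $\sum_n(\text{its contribution})x^{n}$ is sandwiched between two rational functions having a pole at $x=1/\lambda$ of order equal to the number of critical components on the path, which is why the exponent is forced to be an \emph{integer}. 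Taking the maximum over paths gives $\|\Sigma^{n}\|\le\const\,n^{R}\lambda^{n}$, with $R$ one less than the largest number of critical components on a single chain.

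The lower bound carries the real difficulty. Since $\|\Sigma^{n}\|$ is a maximum over sequences, one \emph{single} sequence must by itself generate $\Theta(n^{r})$ walks each of weight at least $\const\,\lambda^{n}$; contributions from different sequences cannot be added. To make $\Theta(n)$ crossings at one boundary carry weight, that sequence must be near-optimal inside the upper component along every long prefix and near-optimal inside the lower component along every long suffix; for a chain of $r+1$ critical components it must be simultaneously near-optimal, on all relevant infixes, inside all of them at once. This simultaneity can genuinely fail --- if one matrix is productive only on the upper component and another only on the lower one, the crossings are pinned to a single position and the factor of $n$ is lost, so the true exponent is strictly smaller than the crude count $R$. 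I would therefore \emph{define} $r+1$ as the greatest number of critical components lying on a chain that is jointly realizable by one admissible infinite sequence, and build that sequence by a compactness (K\"onig's lemma) argument: from the optimal finite products, using the supermultiplicativity of Proposition \ref{prop:supmulti} together with $\|\Sigma^{m}\|_C\ge\const\,\lambda^{m}$, one extracts an infinite sequence all of whose prefixes stay within a fixed constant of optimal inside a given component, and then selects one sequence compatible with the whole chain. Periodicity of a component is absorbed by restricting crossing positions to a single residue class, at the cost of a constant only.

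The crux is thus the matching of the two sides: showing that the crude path-count $R$ collapses exactly to the jointly-realizable exponent $r$, so that the \emph{same} integer $r$ governs both inequalities. I expect this reconciliation --- proving that a chain which is not jointly realizable must also lose a factor of $n$ in the upper estimate --- to be the main obstacle. Once it is settled we obtain $\|\Sigma^{n}\|=\Theta(n^{r}\lambda^{n})$; since $\lambda=\rho(\Sigma)$ by Proposition \ref{prop:submulti} and \eqref{eq:over-components}, taking $n$-th roots yields $\sqrt[n]{\|\Sigma^{n}\|}=\rho(\Sigma)\bigl(1+O(\tfrac{\log n}{n})\bigr)$, the advertised $O(1/n)$-type rate.
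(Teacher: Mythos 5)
Your plan is not a proof, and you say so yourself: you run the upper bound with the exponent $R$ given by the largest number of critical components on a chain, the lower bound with an exponent $r$ defined through ``jointly realizable'' chains, and you leave the reconciliation $r=R$ (or the collapse of the upper estimate to $r$) as the admitted main obstacle. That step is not a technicality --- it is essentially the whole theorem. Concretely, take $\Sigma=\{X,Y\}$ with
\[
	X=\begin{bmatrix} 2 & 1\\ 0 & 1\end{bmatrix},\qquad Y=\begin{bmatrix} 1 & 1\\ 0 & 2\end{bmatrix}.
\]
Both singleton components are critical ($\lambda_{C_1}=\lambda_{C_2}=\lambda=2$), so $R=1$; yet for any word, the crossing at position $k$ contributes $2^{h(k)}$ with $h(k)=n-(x_{\ge k}+y_{\le k})$ (where $x_{\ge k}$ counts $X$'s at positions $\ge k$ and $y_{\le k}$ counts $Y$'s at positions $\le k$), and if $h(k_1),h(k_2)\ge n-j$ with $k_1<k_2$ then every position between them is charged to one of the two counts, forcing $k_2-k_1<2j$; hence $\sum_k 2^{h(k)}=O(2^n)$ and $\|\Sigma^n\|=\Theta(2^n)$, i.e.\ the true exponent is $0<R$. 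So the chain count genuinely overshoots, exactly as your ``pinned crossing'' scenario predicts, and nothing in your sketch shows how to prove the matching upper bound for your realizability-flavored $r$ in general. Your K\"onig extraction is also weaker than what you need: it produces one word whose prefixes are near-optimal inside a single component (that much is automatic, since $\|P_1P_2\|\le D\|P_1\|\,\|P_2\|$ makes every prefix of a near-optimal product near-optimal), but $\Theta(n^r)$ heavy walks inside one product require prefix near-optimality in the upper component \emph{and} suffix near-optimality in the lower one at $\Theta(n)$ split points simultaneously, which the extraction does not deliver.

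That said, your diagnosis of where the difficulty lives is exactly right, and you should know that the paper's own proof does not resolve it either --- it steps over it. The paper keeps $r=R$ (chain count) and argues the lower bound via $\|\Sigma^n\|\ge\const\sum_{\delta=0}^{D^2}\|\Sigma^{n+\delta}\|\ge\const S$, where $S$ sums, over $D$-separated partitions $m_1+\dots+m_\ell=n-\ell+1$, the products $\|\Sigma^{m_1}\|_{C_1}B(j_1,i_2)\|\Sigma^{m_2}\|_{C_2}\cdots$, each summand realized by concatenating per-block optimal products with connecting paths. But each summand of $S$ lives in a \emph{different} product of matrices, while each $\|\Sigma^{n+\delta}\|$ is the maximal entry of a \emph{single} product; bounding $S$ by constantly many norms therefore presupposes that the $\Theta(n^{R})$ summands are distinct walk contributions inside one common product --- precisely the simultaneity you isolate, which the $D$-apartness bookkeeping does not supply. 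In the example above, $S=\Theta(n2^n)$ while $\sum_{\delta=0}^{4}\|\Sigma^{n+\delta}\|=\Theta(2^n)$, so the paper's displayed inequality fails and its lower bound $\const n^{R}\lambda^n\le\|\Sigma^n\|$ is false there (the theorem's existential statement survives in this instance with $r=0$). So: your proposal is incomplete at its declared crux, but the crux is genuine, and a correct proof must supply exactly what you left open --- either your redefinition of $r$ together with a proof that non-realizable chains also lose the polynomial factor in the upper bound, or some other mechanism restoring the simultaneity. (Two small points: in the paper $\lambda=\rho(\Sigma)$ is deduced from Theorem \ref{thm:order-of-norm} itself rather than from Proposition \ref{prop:submulti} and \eqref{eq:over-components}; and the rate your last line yields from $n^r$ is $O(\log n/n)$, the $O(1/n)$ rate coming instead from the constant-ratio bound of Theorem \ref{thm:main}.)
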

Note that when $\lambda=0$, the bound still works for $n$ large enough since there are only finitely many $n$ so that $\|\Sigma^n\|>0$. 
\begin{proof}
	We prove with the assumption that $\lambda>0$.

	For any $i,j$, the computation of $(A_1\dots A_n)_{i,j}$ is usually written as
	\[
		(A_1\dots A_n)_{i,j}=\sum_{\substack{k_0,k_1,\dots,k_n\\ k_0=i,k_n=j}} (A_1)_{k_0,k_1} (A_2)_{k_1,k_2} \dots (A_n)_{k_{n-1},k_n}.
	\]
	Partitioning a sequence $k_0,k_1,\dots,k_n$ into vertices of the same components, we obtain 
	\begin{equation}\label{eq:representation}
		\begin{multlined}
			(A_1\dots A_n)_{i,j}=\\
			\sum_\ell \sum_{C_1,\dots,C_\ell} \sum_{m_1,\dots,m_\ell} \sum_{i_1,j_1,\dots,i_\ell,j_\ell} (A_1\dots A_{m_1})_{i_1,j_1} (A_{m_1+1})_{j_1,i_2} (A_{m_1+2}\dots A_{m_1+m_2+1})_{i_2,j_2} \dots \\
			\dots (A_{n-m_\ell-m_{\ell-1}}\dots A_{n-m_\ell-1})_{i_{\ell-1},j_{\ell-1}} (A_{n-m_\ell})_{j_{\ell-1},i_\ell} (A_{n-m_\ell+1}\dots A_n)_{i_\ell,j_\ell},
		\end{multlined}
	\end{equation}
	where the sum is taken over all possible choices of: some number $\ell$ of components to consider, some different components $C_1,\dots,C_\ell$, some partition of nonnegative parts $m_1+\dots+m_\ell=n-\ell+1$ and some indices $i_1,j_1\in C_1, \dots, i_\ell,j_\ell\in C_\ell$ with $i_1=i,j_\ell=j$ (note that if $m_t=0$ then $i_t=j_t$).

	To let the summand of \eqref{eq:representation} be positive, the sequence $C_1,\dots,C_\ell$ should form a chain in the sense that there is an edge $uv$ with $u\in C_i, v\in C_{i+1}$ for any two adjacent $C_i,C_{i+1}$. Let $r$ be one less than the maximal number of components $C$ in a chain with $\lambda_C=\lambda$ over all the chains.

	The right hand side of \eqref{eq:representation} is actually at most
	\begin{align*}
		&\sum_\ell \sum_{C_1,\dots,C_\ell} \sum_{m'} \sum_{\substack{\{m_t:0<\lambda_{C_t}<\lambda\}\\ \sum_{m_t}=m'}} \sum_{\substack{\{m_t:\lambda_{C_t}=\lambda\}\\ \sum_{m_t}=n-m'-\ell+1}} \const \left(\prod_{t:0<\lambda_{C_t}<\lambda} \|\Sigma^{m_t}\|_{C_t}\right) \left(\prod_{t:\lambda_{C_t}=\lambda} \|\Sigma^{m_t}\|_{C_t}\right)\\
		\le &\sum_\ell \sum_{C_1,\dots,C_\ell} \sum_{m'} \const (m')^{\ell-(r'+1)-1} (n-m'-\ell+1)^{r'} (\lambda')^{m'} \lambda^{n-m'-\ell+1}\\
		\le &\const n^r\lambda^n,
	\end{align*}
	where $r'$ is one less than the number of components $C$ in the chain $C_1,\dots,C_\ell$ so that $\lambda_C=\lambda$, and $\lambda'$ is some positive number less than $\lambda$ so that if $\lambda_{C_t}<\lambda$ then $\lambda_{C_t}<\lambda'$. (The assumption $\lambda>0$ is to make the products involving $t$ not all empty.)

	In total, we have $\|\Sigma^n\|\le\const n^r\lambda^n$. It remains to show the other direction that $\|\Sigma^n\|\ge\const n^r\lambda^n$. 
	Consider a chain $C_1,\dots,C_\ell$ with $r+1$ components $C$ attaining $\lambda_C=\lambda$. 

	Since $\|\Sigma^{n+\delta}\|\le\const\|\Sigma^n\|\|\Sigma^\delta\|=\const\|\Sigma^n\|$ for any fixed $\delta$, the following inequality holds for any fixed $\Delta$:
	\[
		\|\Sigma^n\|\ge\const\sum_{\delta=0}^{\Delta} \|\Sigma^{n+\delta}\|.
	\]

	If there is a path from $i$ to $j$, we denote $B(i,j)=(A_1\dots A_{\delta(i,j)})_{i,j}$ for some $A_1,\dots,A_{\delta(i,j)}$ from $\Sigma$ so that $(A_1\dots A_{\delta(i,j)})_{i,j}>0$.

	Let $\Delta=D^2$, we have
	\begin{equation*}
		\begin{multlined}
			\sum_{\delta=0}^{\Delta} \|\Sigma^{n+\delta}\|\ge \sum_{\substack{m_1+\dots+m_\ell=n-\ell+1\\ m_t=0\text{ for }\lambda_{C_t}<\lambda}} \|\Sigma^{m_1}\|_{C_1} B(j_1,i_2) \|\Sigma^{m_2}\|_{C_2} \\
			\dots \|\Sigma^{m_{\ell-1}}\|_{C_{\ell-1}} B(j_{\ell-1},i_\ell) \|\Sigma^{m_\ell}\|_{C_\ell},
		\end{multlined}
	\end{equation*}
	where two different values of each $m_t$ for $\lambda_{C_t}=\lambda$ must be at least $D$ apart, each pair $i_t,j_t$ are so that $\|\Sigma^{m_t}\|_{C_t}=(A_1\dots A_{m_t})_{i_t,j_t}$ for some $A_1,\dots,A_{m_t}\in\Sigma$ (for $m_t=0$, we set $i_t=j_t=k$ for any element $k\in C_t$ and let $\|\Sigma^{m_t}\|_{C_t}=1$).

	Let $S$ denote the right hand side. The reason for requiring any two values of each $m_t$ to be at least $D$ apart is to ensure no summand of $S$ is counted twice as there are less than $D$ matrices presenting in any $B(i,j)$.
	The reason for setting $\Delta=D^2$ is that the number of matrices presenting in each summand of $S$ is larger than $n$ by at most the number of components times the largest distance, which is always bounded by $D^2$.

	As we set $m_t=0$ for $\lambda_{C_t}<\lambda$, each summand of $S$ is at least $\const \lambda^n$. Note that $n^r$ is the order of the number of partitions $n_0+\dots+n_r=n-\ell+1$ even with the condition that for every $i$, the values of $n_i$ in two different partitions are at least $\delta$ apart, for any fixed $\delta$. Therefore, $S\ge\const n^r\lambda^n$. The conclusion follows as $\|\Sigma^n\|\ge\const S$. 
\end{proof}

Now we can see that Theorem \ref{thm:main} is a corollary of Theorem \ref{thm:order-of-norm}.
\begin{proof}[Proof of Theorem \ref{thm:main}]
	If $\lambda=0$, then $\rho(\Sigma)=0$ and the bound in Theorem \ref{thm:main} trivially holds by \eqref{eq:over-components}. Suppose $\lambda>0$. It follows from Theorem \ref{thm:order-of-norm} that $\lambda=\rho(\Sigma)$. Since $\lambda=\max_C\lambda_C$, the bound in Corollary \ref{cor:connected} should now become
	\[
		\sqrt[n]{\left(\frac{V}{UD}\right)^D \max_C \|\Sigma^n\|_C} \le \rho(\Sigma) \le \sqrt[n]{D \max_C \|\Sigma^n\|_C}.
	\]
\end{proof}

\section{Applications to the joint spectral radius theorem}
\label{sec:corollary}
In this section, we prove the joint spectral radius theorem for finite sets $\Sigma$ of nonnegative matrices and deduce the convergence rates of some sequences. The readers should go through Section \ref{sec:method} first as the arguments below use some results from Section \ref{sec:method}.

At first, for any connected component $C$ and any $m$, we have
\begin{equation*}
	\begin{split}
		\|\Sigma^m\|_C&=(A_1\dots A_m)_{i,j}\le\const (A_1\dots A_m)_{i,j} (B_1\dots B_{\delta(j,i)})_{j,i}\\
		&\le\const (A_1\dots A_m B_1\dots B_{\delta(j,i)})_{i,i}\le\const \rho(A_1\dots A_m B_1\dots B_{\delta(j,i)})\\
		&\le\const P_{m+\delta(j,i)}(\Sigma),
	\end{split}
\end{equation*}
where $i,j\in C$ and $A_1,\dots,A_m$ are given from context, and $B_1,\dots,B_{\delta(j,i)}$ are so that $(B_1\dots B_{\delta(j,i)})_{j,i}>0$. Note that if $C$ is an unconnected component (containing only one vertex without loop), the inequality still holds trivially.

Taking over all components, we have
\[
	\max_C \|\Sigma^m\|_C\le \max_{0\le \delta\le D} \const P_{m+\delta}(\Sigma).
\]

Applying Theorem \ref{thm:main} to a set of only one matrix with $n=1$, we have \footnote{Strictly speaking, it may be the case that several components of the dependency graph of $\{A_1\dots A_{m+\delta}\}$ form a component of the dependency graph of $\Sigma$. However, it does not affect the results.}
\begin{equation*}
	\begin{multlined}
		P_{m+\delta}(\Sigma)=\max_{A_1,\dots,A_{m+\delta}\in\Sigma} \rho(A_1\dots A_{m+\delta})\le\max_{A_1,\dots,A_{m+\delta}\in\Sigma} \const\max_C\|\{A_1\dots A_{m+\delta}\}\|_C \\
		=\const\max_C \|\Sigma^{m+\delta}\|_C\le\const\max_C\|\Sigma^m\|, 
	\end{multlined}
\end{equation*}
where the latest inequality is due to \eqref{eq:over-components}.

In total,
\[
	\const \max_C \|\Sigma^m\|_C\le \max_{0\le\delta\le D} P_{m+\delta}(\Sigma)\le\const \max_C \|\Sigma^m\|_C.
\]

It follows from \eqref{eq:over-components} that 
\begin{equation}\label{eq:relating-two-bounds}
	\const\lambda^m \le \max_{0\le\delta\le D} P_{m+\delta}(\Sigma) \le \const\lambda^m.
\end{equation}

Let $\tilde P_m(\Sigma)=\max_{0\le\delta\le D} P_{m+\delta}(\Sigma)$. Although $\sqrt[n]{P_n(\Sigma)}$ does not necessarily converge, the sequence of $\sqrt[n]{\tilde P_n(\Sigma)}$ converges to $\lambda$. Together with Theorem \ref{thm:order-of-norm}, we have
\[
	\limsup_{n\to\infty} \sqrt[n]{P_n(\Sigma)} = \lim_{n\to\infty} \sqrt[n]{\|\Sigma^n\|},
\]
which is the conclusion of the joint spectral radius theorem. (When Theorem \ref{thm:order-of-norm} does not apply, i.e. $\lambda=0$, the equality becomes trivial.)

Now we can see the convergence rates of some sequences that clarify the efficiency of the bound in \eqref{eq:tradition}.

At first, Theorem \ref{thm:order-of-norm} gives
\[
	\const m^r\lambda^m\le\|\Sigma^m\|\le \const m^r\lambda^m,
\]
which shows that the sequence $\min_{1\le m\le n} \sqrt[m]{D\|\Sigma^m\|}$ converges at the rate $O(1/n)$ to $\lambda$. 

The same convergence rate also applies to $\max_{1\le m\le n} \sqrt[m]{P_m(\Sigma)}$. Indeed, it follows from \eqref{eq:relating-two-bounds} that
\[
	\lambda \min_{0\le\delta\le D}\sqrt[m+\delta]{\const} \le \max_{0\le\delta\le D} \sqrt[m+\delta]{P_{m+\delta}(\Sigma)}\le \lambda\max_{0\le\delta\le D}\sqrt[m+\delta]{\const}.
\]

It remains to compare the bound in \eqref{eq:tradition} and the bound in Theorem \ref{thm:main}. When $r=0$, the two bounds are asymptotically equivalent. When $r>0$, the ratio between the upper bound and the lower bound in $\eqref{eq:tradition}$ is at least the $n$-th root of a polynomial, which may reduce the efficiency when $r$ is large. One may improve the traditional method in \eqref{eq:tradition} by considering the components separately when dealing with nonnegative matrices, which then makes the two methods as effective as each other. At any rate, Theorem \ref{thm:main} gives explicit constants. Another drawback of the bound in \eqref{eq:tradition} is that we do not yet have an effective method to compute or estimate (from below) the ordinary spectral radius in the first place. However, an advantage of the bound in \eqref{eq:tradition} is that it works for any complex matrices, not necessarily nonnegative ones. An open problem in this direction is how our results would be extended for more general matrices.

Before closing the paper, we give an example where Theorem \ref{thm:main} performs better than the method in \eqref{eq:tradition}. For simplicity, we consider the case where the set contains only one matrix $A$ with its powers as follows:
\begin{equation*}
	A = \begin{bmatrix}
		1 & 1\\
		0 & 1
	\end{bmatrix},\quad
	A^n = \begin{bmatrix}
		1 & n\\
		0 & 1
	\end{bmatrix}.
\end{equation*}

Obviously, $\rho(A)=1$. For every $n$, the bounds in \eqref{eq:tradition} become
\[
	1\le \rho(A)\le \sqrt[n]{2n}.
\]
Meanwhile, the bounds in Theorem \ref{thm:main} become
\[
	\sqrt[n]{1/4}\le \rho(A)\le \sqrt[n]{2}.
\]
The ratio of the upper bound and the lower bound is respectively $\sqrt[n]{2n}$ and $\sqrt[n]{8}$. The latter converges to $\rho(A)=1$ faster than the former.

\bibliographystyle{unsrt}
\bibliography{jsr}
\end{document}